\documentclass[12pt]{amsart}

\usepackage{amssymb,latexsym}

\usepackage{enumerate}

\makeatletter

\@namedef{subjclassname@2010}{

  \textup{2010} Mathematics Subject Classification}

\makeatother
\newtheorem{thm}{Theorem}[section]

\newtheorem{cor}[thm]{Corollary}
\newtheorem{lem}[thm]{Lemma}
\newtheorem{pro}[thm]{Proposition}
\theoremstyle{definition}

\numberwithin{equation}{section}

\newcommand{\X}{\mathbb{X}}

\newcommand{\ex}{\mathbb{E}}
\newcommand{\re}{\textup{Re}}
\newcommand{\im}{\textup{Im}}
\newcommand{\F}{\mathcal{F}}
\newcommand{\mc}{\mathcal}
\newcommand{\sums}{\sideset{}{^\flat}\sum}

\frenchspacing

\textwidth=15.5cm

\textheight=23cm

\parindent=16pt

\oddsidemargin=0cm

\evensidemargin=0cm

\topmargin=-0.5cm

\begin{document}

\baselineskip=17pt

\title[Imaginary quadratic fields with prime discriminant]{The number of imaginary quadratic fields with prime discriminant and class number up to $H$}

\author{Youness Lamzouri}

\address{Department of Mathematics and Statistics,
York University,
4700 Keele Street,
Toronto, ON,
M3J1P3
Canada}

\email{lamzouri@mathstat.yorku.ca}

\date{}

\begin{abstract}  
 In this paper, we obtain an asymptotic formula for the number of imaginary quadratic fields with prime discriminant and class number up to $H$, as $H\to \infty$. Previously, such an asymptotic was only known under the assumption of the Generalized Riemann Hypothesis, by the recent work of Holmin, Jones,  Kurlberg, McLeman and Petersen.

\end{abstract}

\subjclass[2010]{Primary  11R29; Secondary 11R11, 11M20}

\thanks{The author is partially supported by a Discovery Grant from the Natural Sciences and Engineering Research Council of Canada.}

\maketitle

\section{Introduction}

The celebrated Gauss class number problem, posed by Gauss in his Disquisitiones Arithmeticae of 1801, asks for the determination of all imaginary quadratic fields with a given class number. This was solved for $h=1$ by Baker, Heegner and Stark in the 50's and 60's, by Baker and Stark for $h=2$, and by Oesterl\'e for $h=3$.  We now have a complete list of all imaginary quadratic fields with class number $h$ for all $h\leq 100$ thanks to the work of Watkins \cite{Wa}.

In \cite{So} Soundararajan proved that there are asymptotically $\frac{3\zeta(2)}{\zeta(3)} H^2$ imaginary quadratic fields with class number up to $H$, as $H\to \infty$, where $\zeta(s)$ is the Riemann zeta function. He also studied the quantity $\F(h)$ defined as the number of imaginary quadratic fields with class number $h$. In particular, a more precise form of his asymptotic formula asserts that
\begin{equation}\label{SoundAsymp} 
\sum_{h\leq H} \F(h)= \frac{3\zeta(2)}{\zeta(3)} H^2 + O_{\varepsilon}\left(\frac{H^2}{(\log H)^{1/2-\varepsilon}}\right).
\end{equation}
The error term was recently improved to $H^2(\log H)^{-1-\varepsilon}$ by the author in \cite{La2}. 

Furthermore, Soundararajan \cite{So}  conjectured that for large $h$ we have
$$\frac{h}{\log h}\ll \F(h)\ll h\log h,$$
where the variation in size depends on the largest power of $2$ that divides $h$.
In particular, when $h$ is odd, he conjectured that 
$$ \F(h)\asymp \frac{h}{\log h},$$
and noted that the precise constant would depend on the arithmetic properties of $h$. %which groups arise as class groups. In a , studied the of determining which finite abelian groups arise as class groups of imaginary quadratic fields, and made several interesting conjectures.  Difficulty arising from the $2$-part of the class number, so they only considered the case where $h$ is odd. 
In their recent investigation of class groups of imaginary quadratic fields, Holmin, Jones,  Kurlberg, McLeman and Petersen \cite{HJKMP} refined this conjecture to an asymptotic estimate. More precisely, they conjectured that as $h\to \infty$ through odd values, we have
$$\F(h) \sim  \mathcal{C}\cdot c(h)\cdot \frac{h}{\log h}   $$
where 
$$ \mathcal{C}=15 \prod_{p>2}\prod_{i=2}^{\infty}\left(1-\frac{1}{p^i}\right) 
\approx 11.317, \ \  \text{ and } \ \ c(h)=\prod_{p^n \| h}\prod_{i=1}^n \left(1-\frac{1}{p^i}\right)^{-1}.$$
To obtain this conjecture, they used the Cohen-Lenstra heuristics, together with a similar asymptotic formula to \eqref{SoundAsymp} averaged over odd values of $h$,
which they proved assuming the Generalized Riemann Hypothesis GRH. More precisely, Theorem 1.5 of \cite{HJKMP} states that conditionally on GRH, we have
\begin{equation}\label{SumOdd}\sum_{\substack{h\leq H\\ h  \text{ odd}}}\F(h)=\frac{15}{4}\cdot \frac{H^2}{\log H} +
O_{\varepsilon}\left(\frac{H^2}{(\log H)^{3/2-\varepsilon}}\right).
\end{equation}
By genus theory, if $d<-8$ is a fundamental discriminant, then the class number of the imaginary quadratic field $\mathbb{Q}(\sqrt{d})$ is odd if and only if $-d$ is prime. Furthermore,  note that the only composite fundamental discriminants $d$ with $-8\leq d<0$ are $-4$ and $-8$ which have class number $1$. Therefore, the number of imaginary quadratic fields with prime discriminant and class number up to $H$ equals
$\sum_{\substack{h\leq H\\ h  \text{ odd}}}\F(h) -2,$
and hence \eqref{SumOdd} might be viewed as a conditional asymptotic formula for this quantity as $H\to \infty$. The goal of the present paper is to establish  \eqref{SumOdd} unconditionally. 
\begin{thm}\label{AverageClass} Let $H$ be large. Then
\begin{equation}\label{MainAsymp}
\sum_{\substack{h\leq H\\ h  \text{ odd}}}\F(h)=\frac{15}{4}\cdot \frac{H^2}{\log H} +
O\left(\frac{H^2 (\log\log H)^3}{(\log H)^{3/2}}\right).
\end{equation}
\end{thm}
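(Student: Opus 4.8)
\emph{The plan} is to start from Dirichlet's class number formula and genus theory, reduce to a count of primes weighted by the distribution of $L(1,\chi_{-p})$, and evaluate that count by Mellin inversion.

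\emph{Step 1 (reduction).} By the genus theory discussion in the introduction, $\sum_{h\le H,\,h\textup{ odd}}\F(h)=2+N(H)$ where $N(H):=\#\{p\equiv 3\ (4):h(-p)\le H\}$. Dirichlet's formula gives $h(-p)=\tfrac{\sqrt p}{\pi}L(1,\chi_{-p})$ for $p>3$ ($\chi_{-p}$ the Kronecker symbol), so $h(-p)\le H$ is equivalent to $p\,L(1,\chi_{-p})^2\le\pi^2H^2$. Fixing smooth majorant/minorant $w^\pm$ of $\mathbf 1_{[0,\pi^2H^2]}$ with transition range of multiplicative width $(\log H)^{-10}$, it suffices to estimate $\sum_{p\equiv 3(4)}w^\pm(p\,L(1,\chi_{-p})^2)$; the gap $\sum w^+-\sum w^-$ is $O(H^2(\log H)^{-11})$ once one has an upper bound for the number of $p$ with $h(-p)$ in a short interval, which the same analysis supplies.

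\emph{Step 2 (Mellin transform and main term).} Writing $w^\pm(x)=\tfrac1{2\pi i}\int_{(c)}\widehat{w^\pm}(s)x^{-s}\,ds$ with $c>1$ leads to
$$\sum_{p\equiv3(4)}w^\pm(p\,L(1,\chi_{-p})^2)=\frac1{2\pi i}\int_{(c)}\widehat{w^\pm}(s)\,G(s)\,ds,\qquad G(s):=\sum_{p\equiv3(4)}p^{-s}L(1,\chi_{-p})^{-2s},$$
the series converging absolutely for $\re s>1$ thanks to Siegel's bound $L(1,\chi_{-p})\gg_\varepsilon p^{-\varepsilon}$ (the source of the ineffectivity of the error term). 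Expanding $L(1,\chi_{-p})^{-2s}=\prod_q(1-\chi_{-p}(q)q^{-1})^{2s}=\sum_m\lambda_s(m)\chi_{-p}(m)m^{-1}$ and using quadratic reciprocity to write $\chi_{-p}(m)=\psi_m(p)$ with $\psi_m$ a real character whose conductor divides $8m$, one gets $G(s)=\sum_m\tfrac{\lambda_s(m)}m\sum_{p\equiv3(4)}\psi_m(p)p^{-s}$, and $\sum_{p\equiv3(4)}\psi_m(p)p^{-s}=\tfrac12\log L(s,\psi_m)-\tfrac12\log L(s,\chi_4\psi_m)+(\textup{holomorphic for }\re s>\tfrac12)$. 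One checks that $\psi_m$ is principal exactly when $m$ is a perfect square, and that $\chi_4\psi_m$ is never principal; hence the only singularity of $G(s)$ on $\re s\ge1-\delta$ is a logarithmic branch point at $s=1$ with leading behaviour $\bigl(\tfrac12\sum_{\ell}\mu^2(\ell)\ell^{-2}\bigr)\log\tfrac1{s-1}=\tfrac12\cdot\tfrac{\zeta(2)}{\zeta(4)}\log\tfrac1{s-1}=\tfrac{15}{2\pi^2}\log\tfrac1{s-1}$ (equivalently $\tfrac12\ex[L(1,\chi)^{-2}]$ for the random Euler product). Moving the contour onto a Hankel loop at $s=1$ and using $\widehat{w^\pm}(s)\approx(\pi^2H^2)^s/s$ together with $\tfrac1{2\pi i}\oint_{s=1}\tfrac{x^s}{s}\log\tfrac1{s-1}\,ds\sim\tfrac{x}{\log x}$ produces the main term $\tfrac{15}{2\pi^2}\cdot\tfrac{\pi^2H^2}{2\log H}=\tfrac{15}{4}\cdot\tfrac{H^2}{\log H}$.

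\emph{Step 3 (making it rigorous; the main obstacle).} To justify Step 2 one replaces $L(1,\chi_{-p})$ by the finite Euler product $L_z(\chi_{-p})=\prod_{q\le z}(1-\chi_{-p}(q)q^{-1})^{-1}$ with $z$ a suitable power of $\log H$, and truncates the $m$-sum at $m\le M_0=\exp((\log\log H)^{O(1)})$, estimating the tail trivially before shifting the contour. For the truncated series only finitely many Dirichlet $L$-functions $L(s,\psi_m),L(s,\chi_4\psi_m)$ of conductor $\le M_0$ occur, so the classical zero-free region — together with Siegel's theorem to exclude exceptional real zeros — permits shifting the contour to $\re s=1-c/(\log\log H)^{O(1)}$, where $G$ is holomorphic and $\ll(\log\log H)^3$ (the cube coming from $\sum_{m\le M_0}|\lambda_s(m)|m^{-1}\ll\prod_{q\le z}(1+2/q+\cdots)\ll(\log z)^2$ times an $O(\log\log H)$ bound for each $\log L$-factor); the residual integral is then $\ll H^2\exp(-c\log H/(\log\log H)^{O(1)})$. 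The surviving errors are: (i) the discrepancy $\zeta(2)/\zeta(4)-\prod_{q\le z}(1+q^{-2})\asymp (z\log z)^{-1}$ in the constant, contributing $\asymp\tfrac{H^2}{\log H}\cdot\tfrac1{z\log z}$; and (ii) replacing $L(1,\chi_{-p})$ by $L_z(\chi_{-p})$ for individual $p$: the primes for which $L(s,\chi_{-p})$ has a zero with $\re s\ge 1-\eta$ number $\ll X^{c\eta}$ by the zero-density estimates of Jutila and Heath--Brown for the family of quadratic $L$-functions (applied to the subfamily $\{\chi_{-p}\}$, which is harmless as the relevant sums are over nonnegative terms), off that set $\log(L(1,\chi_{-p})/L_z(\chi_{-p}))\ll z^{-\eta}$, and the exceptional set itself is controlled using $L(1,\chi_{-p})\gg_\varepsilon p^{-\varepsilon}$. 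Balancing (i) and (ii) by an appropriate choice of $z$ yields the error $O\bigl(H^2(\log\log H)^3(\log H)^{-3/2}\bigr)$.

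\emph{The hard part is Step 3.} Every unconditional ingredient — the zero-density estimates for quadratic $L$-functions, the control of $L(1,\chi_{-p})$ for all but a sparse set of primes, and the two uses of Siegel's theorem (for the lower bound on $L(1,\chi_{-p})$ and for the auxiliary $L(s,\psi_m)$) — must be assembled so that the truncation parameter $z$ can still be taken as large as a power of $\log H$, which is exactly what forces the final error below $H^2(\log H)^{-3/2}$. Under GRH one may take $z$ a power of $H$ and count primes in the residue classes modulo $\prod_{q\le z}q$ directly, which is why \cite{HJKMP} obtained \eqref{SumOdd} conditionally; dispensing with GRH is the crux of the present argument.
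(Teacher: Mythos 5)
Your main-term heuristic (the branch point of $\sum_{p\equiv 3 \,(4)}p^{-s}L(1,\chi_{-p})^{-2s}$ at $s=1$ with coefficient $\tfrac12\ex(L(1,\X)^{-2})=\tfrac{15}{2\pi^2}$, hence $\tfrac{15}{4}H^2/\log H$) is correct and consistent with the paper, but the quantitative core of Step 1/Step 3 does not hold together, and the stated error term is not reached. The width of your smoothing window is not a free parameter: it is dictated by the height $|\textup{Im}(s)|$ up to which you can control the (truncated) series $G(s)$, because $\widehat{w^\pm}(s)$ only decays beyond the reciprocal of the window width. You claim a window of width $(\log H)^{-10}$, which would force you to work at heights $|s|\asymp(\log H)^{10}$; but $|\lambda_s(m)|\le d_{2|s|}(m)$, so your bound $\sum_{m\le M_0}|\lambda_s(m)|m^{-1}\ll(\log z)^2$ is only the case of bounded $|s|$, and, more fundamentally, a Dirichlet polynomial of length $M_0=\exp((\log\log H)^{O(1)})$ cannot approximate $L(1,\chi_{-p})^{-2s}$ once $|s|\gg(\log M_0)/(\log\log H)^{2}$ (compare Proposition \ref{ShortApproxL}, which needs $|z|\le\log y/(\log_2 q)^2$). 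With your truncation lengths the contour analysis is only valid to height $(\log\log H)^{O(1)}$, so the admissible window, and hence the error, cannot be better than $H^2/\big((\log H)(\log\log H)^{O(1)}\big)$ --- far from $H^2(\log\log H)^3/(\log H)^{3/2}$, and inconsistent with your own Step 1 claim of a gap $O(H^2(\log H)^{-11})$. To get the $(\log H)^{1/2}$ saving one must push the Dirichlet series length, hence the auxiliary conductors, up to $\exp(c\sqrt{\log H})$; this is exactly what the paper does, with $y=\exp(\tfrac12\sqrt{\log x})$ and Perron height $T=\sqrt{\log X}/(\log_2 X)^2$, and it is why the final error is $(\log\log H)^3/\sqrt{\log H}$ times the main term.

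Once conductors of size $\exp(\sqrt{\log H})$ are in play, your appeal to Siegel's theorem to ``exclude exceptional real zeros'' of the auxiliary $L(s,\psi_m)$ no longer works: Siegel gives a real-zero-free region of width only $\gg_{\varepsilon}q^{-\varepsilon}$, which for $q$ up to $\exp(\sqrt{\log H})$ yields a contour-shift saving $H^{-2\delta}$ with $\delta\log H\to 0$, i.e.\ nothing. This is precisely the obstruction the paper must face, and its resolution is the missing idea in your proposal: carry the single possible exceptional modulus $q_1$ explicitly through the moment computation (the secondary term $-\textup{sgn}(q_1)\tfrac{\textup{Li}(x^{\beta})}{2}\,\ex(\X(|q_1|)L(1,\X)^{z})$ of Theorem \ref{ComplexMoments}), and then show its contribution to the count is negligible via $|q_1|\gg(\log X)^{1-o(1)}$ combined with the random-model identity \eqref{RandomExpectation2}, which gives $\ex\big(\X(|q_1|)L(1,\X)^{-2\beta}\big)\ll d(|q_1|)/|q_1|\ll(\log H)^{-1+\varepsilon}$. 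Your proposal has no substitute for this mechanism; relatedly, the discriminants $-p$ whose own $L(s,\chi_{-p})$ has a zero near $s=1$ need the Heath--Brown zero-density bound (giving $\ll X^{1/2}$ exceptions, handled trivially via the class number formula), not just the sparse-set remark in your item (ii). Finally, your reliance on Siegel's lower bound $L(1,\chi_{-p})\gg_{\varepsilon}p^{-\varepsilon}$ makes the argument ineffective, whereas the paper uses Tatuzawa's theorem and the explicit $q_1$-term to avoid this; that is a lesser issue, but the gaps above are genuine and the proposed route does not deliver the error term of Theorem \ref{AverageClass} as written.
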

\noindent Note that assuming GRH, the error term above can be improved to $H^2 (\log\log H)^3/(\log H)^{2}$ (see \cite{La2}).

 Let $h(d)$ denote the class number of the quadratic field $\mathbb{Q}(\sqrt{d})$. Dirichlet's class number formula for imaginary quadratic fields asserts that
\begin{equation}\label{Class}
h(d) =\frac{\sqrt{|d|}}{\pi} L(1,\chi_d),
\end{equation}
for all fundamental discriminants $d<-4$, where $\chi_d=\left(\frac{d}{\cdot}\right)$ is the Kronecker symbol, and $L(s,\chi_d)$ is the Dirichlet $L$-function attached to $\chi_d$. Hence, the distribution of $h(d)$ is ultimately connected to that of $L(1, \chi_d)$.

To obtain the conditional estimate \eqref{SumOdd}, the authors of \cite{HJKMP} followed the approach in \cite{So}, which relies on computing the complex moments of $L(1,\chi_d)$. Using the ideas of Granville and Soundararajan \cite{GrSo}, Holmin, Jones,  Kurlberg, McLeman and Petersen \cite{HJKMP} computed the complex moments of $L(1,\chi_d)$ as $d$ varies in $$\mc{P}(x):=\{ d=-p: p\leq x \text{ is prime, and } p\equiv 3 \bmod 4\},
$$
conditionally on GRH. To describe their result, we need some notation. Let $\X(p)$ be a sequence of independent identically distributed random variables such that  $\X(p)=\pm 1$ with equal probabilities $1/2$. Consider the random product
$$ L(1,\X) :=\prod_{p} \left(1-\frac{\X(p)}{p}\right)^{-1},$$
which converges almost surely by Kolmogorov's three series theorem. Then, Theorem 3.3 of \cite{HJKMP} asserts that assuming GRH, for all complex numbers $|z|\leq \log x/(50(\log\log x)^2)$ we have
$$
\sum_{d\in \mathcal{P}(x)} L(1, \chi_d)^z=   |\mathcal{P}(x)|\cdot \ex\left(L(1,\X)^{z}\right)+ O_{\varepsilon}\left(x^{1/2+\varepsilon}\right).
$$
Using a different approach, we obtain an unconditional version of this asymptotic formula, though in a smaller range. One of the difficulties in obtaining such a result unconditionally arises from the possible existence of Landau-Siegel zeros. In this case, we isolate an extra factor in the asymptotic which comes from a single exceptional modulus defined as follows. By Chapter 20 of \cite{Da}, there is at most one square-free integer $q_1$ such that $|q_1|\leq \exp(\sqrt{\log x})$ and $L(s, \chi_{q_1})$ has a zero in the region 
\begin{equation}\label{ExRegion}
\re(s)>1-\frac{c}{\sqrt{\log x}},
\end{equation}
 for some positive constant $c$. Moreover, this zero $\beta$ (if it exists) is unique, real and simple.
\begin{thm}\label{ComplexMoments} Let $x$ be large. Then for all complex numbers $z$ such that $\re(z)\geq -1$ and $|z|\leq \sqrt{\log x}/(\log\log x)^2$ we have
\begin{align*}
\sum_{d\in \mathcal{P}(x)} L(1, \chi_d)^z= \ & \frac{\textup{Li}(x)}{2} \cdot \ex\left(L(1,\X)^{z}\right)-\textup{sgn}(q_1) \displaystyle{\frac{\textup{Li}(x^{\beta})}{2}} \cdot \ex\Big(\X(|q_1|)\cdot L(1,\X)^{z}\Big)\\
& + O\left(x\exp\left(-\frac{\sqrt{\log x}}{5\log\log x}\right)\right),
\end{align*}
where $\textup{sgn}(q_1)$ is the sign of $q_1$.
\end{thm}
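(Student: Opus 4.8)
\emph{The plan} is to replace $L(1,\chi_{-p})^z$ by a truncated Euler product, interchange, recognise the resulting prime character sums via the prime number theorem in arithmetic progressions together with the analysis of the exceptional modulus recalled above, and finally match the main and secondary terms with the two Euler‑product averages on the right. Let $d_z$ be the multiplicative function with $\sum_{n\ge1}d_z(n)n^{-s}=\zeta(s)^z$, so $(1-\chi(p)/p)^{-z}=\sum_{k\ge0}d_z(p^k)\chi(p)^kp^{-k}$ and $|d_z(n)|\le d_{|z|}(n)$. Since the $\X(p)$ are independent with $\ex(\X(p)^a)=\mathbf{1}[a\text{ even}]$, one has $\ex\!\big(L(1,\X)^z\big)=\sum_n d_z(n)\ex(\X(n))/n=\sum_{m\ge1}d_z(m^2)/m^2$ and, since $q_1$ is square‑free, $\ex\!\big(\X(|q_1|)L(1,\X)^z\big)=\sum_{m\ge1}d_z(|q_1|m^2)/(|q_1|m^2)$; these are the shapes to aim for.

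\emph{Step 1 (replacing the $L$-value).} Put $Q=\exp(\sqrt{\log x})$. For a prime $p\le x$ with $p\equiv3\pmod4$ such that $L(s,\chi_{-p})$ has no zero in a fixed region $\re s>1-\eta_0$, $|\im s|\le x$ (with $\eta_0$ an absolute constant), the classical zero–free region bounds give $\sum_{q>Q}\chi_{-p}(q)/q\ll\exp(-c\sqrt{\log x})$, hence $L(1,\chi_{-p})^z=L(1,\chi_{-p};Q)^z\big(1+O(\exp(-c'\sqrt{\log x}))\big)$ where $L(1,\chi_{-p};Q)=\prod_{q\le Q}(1-\chi_{-p}(q)/q)^{-1}$; as $|L(1,\chi_{-p};Q)^z|\le(\log Q)^{|z|}=\exp(O(\sqrt{\log x}/\log\log x))$ in our range of $z$, the difference is $O(\exp(-c''\sqrt{\log x}))$ per such $p$. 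The set of $p\le x$ for which $L(s,\chi_{-p})$ \emph{does} have such a zero has size $\ll x^{1-\delta}$ by a log–free zero–density estimate for the family of quadratic characters; for those $p$ — and for the single exceptional modulus, if $q_1=-p_1$ with $p_1$ prime — one bounds $L(1,\chi_{-p})^z$ trivially, using $\re z\ge-1$ together with $L(1,\chi_{-p})\gg p^{-1/2}$ and Siegel's theorem, so their total contribution is negligible. One then writes $L(1,\chi_{-p};Q)^z=\sum_{P^+(n)\le Q}d_z(n)\chi_{-p}(n)/n$ and truncates at $n\le N$ with $\log N$ a large multiple of $\log x$, the $Q$-smooth tail being $O(\exp(-\sqrt{\log x}))$ by Rankin's trick.

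\emph{Steps 2--3 (character sums and reassembly).} After interchanging, the main piece is $\sum_{n\le N,\,P^+(n)\le Q}\frac{d_z(n)}{n}\sum_{p\le x,\,p\equiv3(4)}\chi_{-p}(n)$. Writing $n=rk^2$ with $r$ square‑free, one has $\chi_{-p}(n)=\chi_{-p}(r)\,\mathbf{1}[(p,k)=1]$, and quadratic reciprocity — using $p\equiv3\pmod4$ — turns $\chi_{-p}(r)$ into $\psi_r(p)$ for an explicit real character $\psi_r$ of modulus dividing $4r$, principal exactly when $r=1$. For $\sqrt x<r\le N$ the trivial bound on the prime sum together with the rapid decay of $\sum_{r>\sqrt x,\,P^+(r)\le Q}d_z(r)/r$ gives a negligible contribution; for $Q<r\le\sqrt x$ one invokes the large sieve for character sums over primes; and for $r\le Q$ one uses the explicit–formula estimate of Chapter~20 of \cite{Da}: with $\theta(x,\psi)=\sum_{p\le x}\psi(p)\log p$, $\theta(x,\psi_r)=-\mathbf{1}[\psi_r=\chi_{q_1}]x^{\beta}/\beta+O(x\exp(-c\sqrt{\log x}))$ (main term $x$ for $r=1$). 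Combining with $\mathbf{1}[p\equiv3(4)]=\tfrac12(1-\chi_{-4}(p))$, and noting that $\chi_{-4q_1}$ has no exceptional zero, yields
\[
\sum_{\substack{p\le x\\ p\equiv3(4)}}\chi_{-p}(rk^2)=\frac{\textup{Li}(x)}{2}\,\mathbf{1}[r=1]-\frac{\textup{sgn}(q_1)\,\textup{Li}(x^{\beta})}{2}\,\mathbf{1}[r=|q_1|]+O\!\big(x\exp(-c'\sqrt{\log x})+\log n\big),
\]
the signs being checked case by case according to whether $q_1>0$ or $q_1<0$ and to $|q_1|\bmod4$. Inserting this, the term $r=1$ contributes $\tfrac12\textup{Li}(x)\sum_k d_z(k^2)/k^2=\tfrac12\textup{Li}(x)\,\ex(L(1,\X)^z)$ up to the harmless tails removed at $N$ and $Q$, the term $r=|q_1|$ contributes $-\tfrac12\textup{sgn}(q_1)\textup{Li}(x^{\beta})\,\ex(\X(|q_1|)L(1,\X)^z)$, and all remaining errors, summed against $\sum_n d_{|z|}(n)/n\ll(\log N)^{|z|}$, are absorbed into $O(x(\log\log x)^0\exp(-\sqrt{\log x}/(5\log\log x)))$.

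\emph{Main obstacle.} The delicate point is Step~1: unlike under GRH one cannot approximate $L(1,\chi_{-p})^z$ by a short Euler product for \emph{every} $p$, so one must isolate the provably sparse set of $p$ for which $L(s,\chi_{-p})$ has a zero close to $s=1$ — requiring a quantitatively strong, log‑free zero‑density estimate, strong enough to beat the trivial lower bound $L(1,\chi_{-p})\gg p^{-1/2}$ when $\re z$ is near $-1$ — and one must control the prime character sums $\sum_{p\le x}\psi_r(p)$ uniformly for $r$ ranging well beyond $\exp(\sqrt{\log x})$, which is where the large sieve and the smallness of the $d_z$‑weighted tails carry the argument. A subsidiary but unavoidable nuisance is the sign bookkeeping in the reciprocity step, where the residue class of $q_1$ modulo $8$ must be tracked to see that it produces precisely the single factor $\textup{sgn}(q_1)$ in the statement.
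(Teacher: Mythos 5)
Your architecture is genuinely different from the paper's, and most of it can be salvaged, but as written it has one load-bearing step that is not justified and whose stated justification is false, plus a fixable misstatement in Step 1. On Step 1: the paper never truncates the Euler product at a prime cutoff $Q$. It proves (Proposition \ref{ShortApproxL}, via a contour shift of width only $1/\log_2 q$ inside the assumed zero-free rectangle, where Lemma \ref{BoundLZeroFree} gives $|\log L|\le \log_3 q+O(1)$) that outside an exceptional set of size $O(x^{1/2})$ — controlled by Heath-Brown's density estimate at height $(\log x)^{20}$, \emph{not} height $x$ — the quantity $L(1,\chi_d)^z$ equals a smoothed Dirichlet series of length about $\exp(\sqrt{\log x})$, so every modulus that ever arises lies in the Siegel--Walfisz/Page range and Lemma \ref{CharSum} covers everything; no large sieve appears anywhere. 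Your truncated-Euler-product version of this step is legitimate in principle (you approximate $\log L$, which enters linearly, so a Borel--Carath\'eodory bound $\log L\ll\log x$ inside the rectangle is affordable), but you cannot demand a zero-free rectangle up to height $x$: the quoted family zero-density estimate has $T$-exponent $(3-2\sigma)/(2-\sigma)\ge 1$ and is useless at $T=x$, so you must either lower the height to a power of $\log x$ (which suffices, since the smoothing kills high frequencies) or appeal to a genuinely log-free density theorem; also, invoking Siegel's theorem makes all constants ineffective, whereas the paper gets by with $\log L(1,\chi)\ll\log_2 x$ for the non-exceptional bad characters and the class-number bound for the $O(\log x)$ exceptional ones.

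The genuine gap is in Steps 2--3, in the range $\exp(\sqrt{\log x})<r\le\sqrt{x}$. After expanding $\prod_{q\le Q}(1-\chi(q)/q)^{-z}$ with $Q=\exp(\sqrt{\log x})$ and $k\asymp|z|$, the $d_z$-weighted mass is concentrated on $n$ (and on square-free kernels $r$) of size about $\exp(k\log Q)=\exp\bigl(\log x/(\log\log x)^2\bigr)$: the weighted sum $\sum_{Q<r\le \sqrt x} d_{|z|}(r)/r$ is \emph{not} a small tail — it is essentially the whole sum, of size $\exp\bigl((\tfrac12+o(1))\sqrt{\log x}/\log\log x\bigr)$, since Rankin's trick only begins to save beyond $\exp(c\log x/\log\log x)$. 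So your phrase ``the smallness of the $d_z$-weighted tails'' is wrong in exactly the range that matters, and for these moduli no unconditional individual bound on $\sum_{p\le x}\psi_r(p)$ exists; ``one invokes the large sieve'' is therefore the heart of the proof, not a routine appeal, and the naive implementation (large sieve to say most $|S_r|$ are small, then sum the first-moment weights) does not close, because the exceptional moduli cannot be controlled by first moments alone. The step can be repaired — e.g.\ by Cauchy--Schwarz against $\sum_{r\le\sqrt x}|S_r|^2\ll x\pi(x)$ together with the \emph{restricted second moment} $\sum_{r>Q}d_k(r)^2/r^2\ll Q^{-1/(2\log k)}\exp\bigl(O(k\log\log k)\bigr)=\exp\bigl(-(1-o(1))\sqrt{\log x}/\log\log x\bigr)$ coming from \eqref{Divisor3}, which yields a total error $x\exp\bigl(-(\tfrac12-o(1))\sqrt{\log x}/\log\log x\bigr)$, just inside the theorem's error term — but this (or an equivalent argument exploiting how the weights distribute over kernels beyond $Q$) is a specific quantitative idea that your sketch neither states nor points toward. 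The paper's short-series approximation is precisely the device that avoids ever having to face these large moduli.
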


If such an exceptional discriminant $q_1$ exists, then we must have $|q_1|\geq (\log x)^{1-o(1)}$ (see Chapter 20 of \cite{Da}). This allows us to prove that the contribution of the secondary term in Theorem \ref{ComplexMoments} to the asymptotic estimate of $\sum_{\substack{h\leq H\\ h  \text{ odd}}}\F(h)$ in Theorem \ref{AverageClass} is negligible. 
We also note that using Theorem \ref{ComplexMoments} together with Soundararajan's method \cite{So} (as in the proof of \eqref{SumOdd} in \cite{HJKMP}) produces only a saving of $(\log H)^{1/4-\varepsilon}$ in the error term of Theorem \ref{AverageClass}, since the range of validity of the asymptotic in Theorem \ref{ComplexMoments} is reduced to $|z|\leq (\log x)^{1/2-o(1)}$. Instead, we use the approach of \cite{La2} in order to obtain the improved saving of $(\log H)^{1/2-\varepsilon}$ in Theorem \ref{AverageClass}, which matches that of the conditional estimate \eqref{SumOdd}.

%%%%%%%%%%%%%%%%%%%%%%%%%%%%%%%%%%%%%%%%%%%
\section{Preliminary results}
Let $\chi\pmod q$ be a Dirichlet character, and $z\in \mathbb{C}$. For all complex numbers $s$ with $\re(s)>1$ we have 
$$ L(s, \chi)^z= \sum_{n=1}^{\infty} \frac{d_z(n)}{n^s}\chi(n),$$
where $d_z(n)$ is the $z$-th divisor function, defined as the multiplicative function such that $d_z(p^a)=\Gamma(z+a)/(\Gamma(z)a!)$ for all primes $p$ and positive integers $a$. We shall need the following bounds for these divisor functions and their sums. First, note that
\begin{equation}\label{Divisor1}
|d_z(n)|\leq d_{|z|}(n)\leq d_k(n)
\end{equation}
for any integer $k\geq |z|$, and $d_k(mn)\leq d_k(m)d_k(n)$ for any positive integers $k,m,n$.  Furthermore,
for $k\in {\Bbb N}$, and $y>3$ we have 
 $$
 d_k(n)e^{-n/y}\leq
e^{k/y}\sum_{a_1...a_k=n}e^{-(a_1+...+a_k)/y},
$$ and hence
\begin{equation}\label{Divisor2}
 \sum_{n=1}^{\infty}\frac{d_k(n)}{n}e^{-n/y}\leq \left(e^{1/y}
\sum_{a=1}^{\infty}\frac{e^{-a/y}}{a}\right)^k\leq (\log
2y)^k.
\end{equation}
We will also need the following bound,  which follows from Lemma 3.3 of \cite{La1} 
\begin{equation}\label{Divisor3}
 \sum_{n=1}^{\infty}\frac{d_k(n)^2}{n^{2-\delta}} \leq \exp\left((2+o(1)) k\log\log k \right),
\end{equation}
where $k$ is a large positive integer, and $\delta$ is any positive real number such that $\delta\leq 1/(2\log k)$.

Let $\X(p)$ be a sequence of independent identically distributed random variables such that  $\X(p)=\pm 1$ with equal probabilities $1/2$. 
We extend the $\X(p)$'s multiplicatively to all positive integers by setting $\X(1)=1$ and 
$ \X(n):= \X(p_1)^{a_1}\cdots \X(p_k)^{a_k} $ if $n= p_1^{a_1}\cdots p_k^{a_k}.$ Since $\ex(\X(p^a))=\ex(\X(p)^a)=1$ if $a$ is even, and equals $0$ if $a$ is odd, then for any positive integer $\ell$ we have $\ex(\X(\ell))=1$ if $\ell$ is a square, and equals $0$ otherwise. For $\sigma>1/2$ we define the random Euler product 
$$ L(\sigma, \X):=\prod_{p} \left(1-\frac{\X(p)}{p^{\sigma}}\right)^{-1}$$
which converges almost surely by Kolmogorov's three series theorem. Let $1/2<\sigma\leq 1$, $z\in \mathbb{C}$, and $q\geq 1$ be a square-free number. Then, we have almost surely $$
L(\sigma, \X)^{z}= \sum_{n=1}^{\infty} \frac{d_z(n)}{n^{\sigma}}\X(n),
$$
and hence
$$ \ex\left(\X(q)\cdot L(\sigma, \X)^{z}\right)= \ex\left(\X(q)\sum_{n=1}^{\infty} \frac{d_z(n)}{n^{\sigma}}\X(n)\right)= \sum_{n=1}^{\infty} \frac{d_z(n)}{n^{\sigma}}\ex\left(\X(qn)\right).$$
Moreover, since $q$ is square-free, then $qn$ is a square if and only if $n=q m^2$ for some integer $m\geq 1$. Thus, we obtain
\begin{equation}\label{RandomExpectation1}
\ex\left(\X(q)\cdot L(\sigma, \X)^{z}\right)= \sum_{m=1}^{\infty} \frac{d_z(qm^2)}{(qm^2)^{\sigma}}.
\end{equation}
In particular, since $\X(q)\leq 1$ and $L(\sigma, \X)>0$ almost surely, then for any real number $k$ we have
\begin{equation}\label{BoundRandom}
\sum_{m=1}^{\infty} \frac{d_k(qm^2)}{(qm^2)^{\sigma}}= \ex\left(\X(q)\cdot L(\sigma, \X)^{k}\right)\leq \ex\left(L(\sigma, \X)^{k}\right)=  \sum_{m=1}^{\infty} \frac{d_k(m^2)}{m^{2\sigma}}.
\end{equation}
Furthermore, observe that
\begin{equation}\label{RandomExpectation2}
\begin{aligned}
&\ex\left(\X(q)\cdot L(1,\X)^z\right)
=\prod_{p \mid q} \ex\left(\X(p)\left(1-\frac{\X(p)}{p}\right)^{-z}\right)\prod_{p\nmid q} \ex\left(\left(1-\frac{\X(p)}{p}\right)^{-z}\right)\\
&=\prod_{p \mid q} \left(\frac 12 \left(1-\frac{1}{p}\right)^{-z}-\frac 12 \left(1+\frac{1}{p}\right)^{-z}\right)\prod_{p\nmid q}\left(\frac 12 \left(1-\frac{1}{p}\right)^{-z}+\frac 12 \left(1+\frac{1}{p}\right)^{-z}\right)\\
&= \ex\left(L(1,\X)^z\right)\prod_{p \mid q}\left(\frac{(p-1)^{-z}-(p+1)^{-z}}{(p-1)^{-z}+(p+1)^{-z}}\right)\\
&= \ex\left(L(1,\X)^z\right)\prod_{p \mid q}\left(\frac{(p+1)^{z}-(p-1)^{z}}{(p+1)^{z}+(p-1)^{z}}\right).
\end{aligned}
\end{equation}

In \cite{La0}, the author studied the distribution of a large class of random models, which includes $L(1,\X)$. In particular, it follows from Theorem 1 of \cite{La0} that there is an explicit constant $A$, such that for large $\tau$ we have
\begin{equation}\label{LargeDeviation}
\mathbb{P}(L(1, \X)\geq e^{\gamma} \tau)= \exp\left(-\frac{e^{\tau-A}}{\tau}\left(1+O\left(\frac{1}{\sqrt{\tau}}\right)\right)\right),
\end{equation}
and 
\begin{equation}\label{LargeDeviation2}
\mathbb{P}\left(L(1, \X)\leq \zeta(2)(e^{\gamma} \tau)^{-1}\right)= \exp\left(-\frac{e^{\tau-A}}{\tau}\left(1+O\left(\frac{1}{\sqrt{\tau}}\right)\right)\right),
\end{equation}
where $\gamma$ is the Euler-Mascheroni constant. These large deviation estimates will be used in the proof of Theorem \ref{AverageClass}.

In order to compute the complex moments of $L(1, \chi_d)$ over $d\in \mc{P}(x)$ and prove Theorem \ref{ComplexMoments}, we need to estimate the character sum $\sum_{d\in \mc{P}(x)} \chi_{d}(n)$. By the law of quadratic reciprocity, this amounts to estimating the character sum over primes $\sum_{p\leq x} \chi_n (p)$. It follows from  Chapter 20 of \cite{Da}, that for all square-free integers $|n|\leq \exp(\sqrt{\log x})$ with at most one exception $q_1$, we have
\begin{equation}\label{NoExceptional}
\sum_{p\leq x} \chi_n(p) \ll x\exp\left(-c\sqrt{\log x}\right),
\end{equation}
for some positive constant $c$. Furthermore, for this exceptional $q_1$ (if it exists), the associated $L$-function $L(s, \chi_{q_1})$ has a unique real simple zero $\beta$, such that 
$\beta>1-c/\sqrt{\log x},$
and moreover we have
\begin{equation}\label{Exceptional}
\sum_{p\leq x} \chi_{q_1}(p)= -\textup{Li}\left(x^\beta\right) + x\exp\left(-c\sqrt{\log x}\right).
\end{equation}
\begin{lem}\label{CharSum}
Let $x$ be large and $n\leq \exp\left(\sqrt{\log x}\right)$ be a positive integer. Then, we have
$$
\sum_{d\in \mc{P}(x)} \chi_{d}(n)=\begin{cases} \displaystyle{\frac{\textup{Li}(x)}{2}} +O\left(x\exp\left(-c\sqrt{\log x}\right)\right), & \textup{ if } n= m^2,\\ \\
-\textup{sgn}(q_1) \displaystyle{\frac{\textup{Li}(x^{\beta})}{2}} +O\left(x\exp\left(-c\sqrt{\log x}\right)\right), &\textup{ if } n=  |q_1| \cdot m^2,\\ \\
O\left(x\exp\left(-c\sqrt{\log x}\right)\right), & \textup{ otherwise}.\end{cases}
$$
\end{lem}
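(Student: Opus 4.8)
The plan is to reduce the sum over discriminants $d=-p$ to a character sum over primes by quadratic reciprocity, and then to feed the resulting prime sums into \eqref{NoExceptional} and \eqref{Exceptional}. First I would write $n=n_0m^2$ with $n_0$ squarefree. Since the Kronecker symbol is completely multiplicative in its argument, $\chi_d(n)=\chi_d(n_0)\chi_d(m)^2$, and $\chi_d(m)^2$ equals $1$ when $\gcd(d,m)=1$ and $0$ otherwise; moreover $\chi_{-p}(n_0)=0$ whenever $p\mid n_0$. Since the primes appearing are $\equiv 3\pmod 4$ and hence odd, this gives
\[
\sum_{d\in\mc{P}(x)}\chi_d(n)=\sum_{\substack{p\le x,\ p\equiv 3\ (4)\\ p\nmid m}}\chi_{-p}(n_0)=\sum_{\substack{p\le x\\ p\equiv 3\ (4)}}\chi_{-p}(n_0)+O(\omega(m)),
\]
using $|\chi_{-p}(n_0)|\le 1$. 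The key elementary point is that for an odd prime $p\equiv 3\pmod 4$ with $p\nmid n_0$ one has $\chi_{-p}(n_0)=\big(\tfrac{n_0}{p}\big)=\chi_{n_0}(p)$, where $\chi_{n_0}$ denotes the Kronecker symbol $\big(\tfrac{n_0}{\cdot}\big)$. Writing $n_0=2^{\varepsilon}n_1$ with $n_1$ odd, this follows from the reciprocity and supplementary laws for the Jacobi symbol: since $p\equiv 3\pmod 4$, the factor $(-1)^{\frac{p-1}{2}\cdot\frac{n_1-1}{2}}$ produced by reciprocity cancels the factor $\big(\tfrac{-1}{n_1}\big)=(-1)^{\frac{n_1-1}{2}}$ in $\big(\tfrac{-p}{n_1}\big)=\big(\tfrac{-1}{n_1}\big)\big(\tfrac{p}{n_1}\big)$, and likewise $\big(\tfrac{-p}{2}\big)=\big(\tfrac{2}{p}\big)$ when $p\equiv 3\pmod 4$; combining the cases $\varepsilon=0,1$ yields the claimed identity.

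Next I would detect the congruence condition via $\mathbf{1}_{p\equiv 3\ (4)}=\tfrac12\bigl(1-\chi_{-4}(p)\bigr)$, valid for odd $p$. Since $\chi_{-4}(p)\chi_{n_0}(p)=\big(\tfrac{-1}{p}\big)\big(\tfrac{n_0}{p}\big)=\big(\tfrac{-n_0}{p}\big)=\chi_{-n_0}(p)$ for odd $p\nmid n_0$, it follows that
\[
\sum_{d\in\mc{P}(x)}\chi_d(n)=\frac12\sum_{p\le x}\chi_{n_0}(p)-\frac12\sum_{p\le x}\chi_{-n_0}(p)+O\bigl(\omega(n)+1\bigr).
\]
Both moduli $n_0$ and $-n_0$ are squarefree with absolute value at most $n\le\exp(\sqrt{\log x})$, so by \eqref{NoExceptional} each of these prime sums is $O\bigl(x\exp(-c\sqrt{\log x})\bigr)$ unless that modulus coincides with the exceptional $q_1$.

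It remains to run the case analysis. If $n=m^2$ then $n_0=1$: here $\sum_{p\le x}\chi_1(p)=\pi(x)=\textup{Li}(x)+O\bigl(x\exp(-c\sqrt{\log x})\bigr)$ by the prime number theorem, while $\chi_{-1}$ is a fixed real character which for large $x$ cannot be the exceptional modulus (indeed $|q_1|\ge(\log x)^{1-o(1)}$), so $\sum_{p\le x}\chi_{-1}(p)=O\bigl(x\exp(-c\sqrt{\log x})\bigr)$ by \eqref{NoExceptional}; this gives the first case. If $n=|q_1|m^2$ then $n_0=|q_1|>1$, and exactly one of $\chi_{n_0},\chi_{-n_0}$ equals $\chi_{q_1}$, namely the one of sign $\textup{sgn}(q_1)$; by \eqref{Exceptional} its prime sum equals $-\textup{Li}(x^{\beta})+O\bigl(x\exp(-c\sqrt{\log x})\bigr)$, while the other, a non-exceptional squarefree modulus of the same size, contributes $O\bigl(x\exp(-c\sqrt{\log x})\bigr)$ by \eqref{NoExceptional}; tracking the coefficients $\pm\tfrac12$ then produces $-\textup{sgn}(q_1)\tfrac{\textup{Li}(x^{\beta})}{2}+O\bigl(x\exp(-c\sqrt{\log x})\bigr)$, as claimed. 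In every remaining case $n_0>1$ and $n_0\ne|q_1|$, so neither $\chi_{n_0}$ nor $\chi_{-n_0}$ is exceptional and both sums are $O\bigl(x\exp(-c\sqrt{\log x})\bigr)$ by \eqref{NoExceptional}. Throughout, $\omega(n)\ll\log n/\log\log n\ll\sqrt{\log x}/\log\log x$ is absorbed by the error term.

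The step demanding the most care is the interplay between the reciprocity computation and the exceptional modulus: detecting $p\equiv 3\pmod 4$ necessarily splits the sum into contributions of $\chi_{n_0}$ and $\chi_{-n_0}$, and one must verify that the sign $\textup{sgn}(q_1)$ of whichever of the two carries the Siegel zero reproduces exactly the coefficient $-\textup{sgn}(q_1)/2$ of $\textup{Li}(x^{\beta})$ in the statement, together with checking that the two parity cases for $n_0$ in the reciprocity step genuinely collapse to the single clean identity $\chi_{-p}(n_0)=\chi_{n_0}(p)$.
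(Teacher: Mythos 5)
Your proposal is correct and follows essentially the same route as the paper's proof: write $n=n_1m^2$ with $n_1$ squarefree, use quadratic reciprocity (with $p\equiv 3\bmod 4$) to convert the sum into $\tfrac12\sum_{p\le x}\chi_{n_1}(p)-\tfrac12\sum_{p\le x}\chi_{-n_1}(p)+O(\omega(n))$, and then invoke the prime number theorem together with \eqref{NoExceptional} and \eqref{Exceptional} in the three cases. The only difference is that you spell out the reciprocity and sign bookkeeping (including the even $n_1$ case and the $\textup{sgn}(q_1)$ check) in more detail than the paper does, which is harmless.
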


\begin{proof}
First, we have 
$$\sum_{d\in \mc{P}(x)} \chi_{d}(n)=\sum_{\substack{p\leq x\\ p\equiv 3 \bmod 4}} \left(\frac{-p}{n}\right).$$
Write $n=n_1 m^2 $ where $n_1$ is square-free. Then, it follows from the law of quadratic reciprocity that for any prime $p\equiv 3\bmod 4$ such that $p\nmid n$, we have
$$\left(\frac{-p}{n}\right)= \left(\frac{n}{p}\right)=\left(\frac{n_1}{p}\right).$$ Thus, we get
$$\sum_{d\in \mc{P}(x)} \chi_{d}(n)= \sum_{\substack{p\leq x\\ p\equiv 3 \bmod 4}} \left(\frac{n_1}{p}\right) +O\left(\omega(n)\right)= 
\frac{1}{2} \sum_{p\leq x} \left(\frac{n_1}{p}\right)- \frac{1}{2}\sum_{p\leq x} \left(\frac{-n_1}{p}\right) + O\left(\omega(n)\right).$$
The first estimate, which corresponds to the case $n_1=1$, follows simply from the prime number theorem in arithmetic progressions. 
%$$ \sum_{d\in \mc{P}(x)} \chi_{d}(n)=\pi(x; 4, 3) + O\left(\omega(n)\right)= \textup{Li}(x) +O\left(x\exp\left(-c\sqrt{\log x}\right)+\omega(n)\right).
%$$
Now, if $n_1=|q_1|$, then we get
$$\sum_{p\leq x} \left(\frac{n_1}{p}\right)- \sum_{p\leq x} \left(\frac{-n_1}{p}\right)= -\textup{sgn}(q_1)\cdot\textup{Li}(x^{\beta})+O\left(x\exp\left(-c\sqrt{\log x}\right)\right),
$$
by \eqref{NoExceptional} and \eqref{Exceptional}. The final estimate follows from \eqref{NoExceptional}.
\end{proof}

Note that Lemma \ref{CharSum} is valid only in the small range $n\leq \exp(\sqrt{\log x})$. Hence, in order to use this result in the proof of Theorem \ref{ComplexMoments}, we need to find an approximation of the form
$$L(1,\chi_d)^z\approx\sum_{n\leq y} \frac{d_z(n)\chi_d(n)}{n}$$
where $y\leq \exp(\sqrt{\log x})$. The following result, which is a slightly different version of Proposition 3.3  of \cite{DaLa}, shows that we can find a good approximation to $L(1,\chi_d)^z$ if
 $L(s,\chi_d)$ has no zeros in a certain small rectangle near $1$. The proof is similar to that of Proposition 3.3  of \cite{DaLa}, but we shall include it for the sake of completeness.  Here and throughout we let $\log_j$ be the $j$-fold iterated logarithm; that is, $\log_2=\log\log$, $\log_3=\log\log\log$ and so on.
\begin{pro}\label{ShortApproxL}
Let $q$ be large and $0<\delta<1/2$ be fixed. Let $\chi$ be a non-principal character modulo $q$, and  $y$ be a real number in the range $\exp\left((\log_2 q)^3\right) \leq y \leq q$. Assume that $L(s,\chi)$ has no zeros inside the rectangle $\{s:1-\delta <\textup{Re}(s)\leq 1 \text{ and } |\textup{Im}(s)|\leq 2(\log q)^{2/\delta}\}$.
Then, for any complex number $z$ with $|z|\leq  (\log y)/(\log_2 q)^{2}$ we have
$$L(1,\chi)^z=\sum_{n=1}^{\infty}\frac{d_z(n)\chi(n)}{n}e^{-n/y}+O_{\delta}\left(\exp\left(-\frac{\log y}{2\log_2 q}\right)\right).$$
\end{pro}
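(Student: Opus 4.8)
The plan is to express the smoothed Dirichlet series as a Mellin--Barnes integral, shift the contour leftwards into the zero-free rectangle so as to pick up $L(1,\chi)^z$ as a residue, and then bound the shifted integral by controlling $L(s,\chi)^z$ there through a short prime-sum approximation of $\log L(s,\chi)$. First I would set $c:=1/\log y$; on the line $\re(s)=c$ we have $\re(1+s)>1$, so there $\sum_n d_z(n)\chi(n)n^{-1-s}=L(1+s,\chi)^z$, where throughout $L(s,\chi)^z:=\exp(z\log L(s,\chi))$ with $\log L(s,\chi)$ defined by its Dirichlet series for $\re(s)>1$ and continued analytically into the rectangle of the hypothesis. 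Substituting $e^{-n/y}=\frac1{2\pi i}\int_{(c)}\Gamma(s)(y/n)^s\,ds$ and interchanging the sum and the integral, which is legitimate by \eqref{Divisor1}, \eqref{Divisor2} and the rapid decay of $\Gamma$ on vertical lines, we obtain
$$\sum_{n=1}^\infty\frac{d_z(n)\chi(n)}{n}\,e^{-n/y}=\frac1{2\pi i}\int_{(c)}L(1+s,\chi)^z\,\Gamma(s)\,y^s\,ds.$$

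Next, I would put $T:=(\log q)^{2/\delta}$ and $\alpha:=1/\log_2 q$, so that $0<c<\alpha<\delta/2$ once $q$ is large, and shift the contour $\re(s)=c$ to the path $\mathcal{C}$ consisting of the vertical segment $\re(s)=-\alpha$, $|\im(s)|\le T$, the horizontal segments $\im(s)=\pm T$, $-\alpha\le\re(s)\le c$, and the two half-lines $\re(s)=c$, $|\im(s)|\ge T$. The only singularity of the integrand in the region swept out is the simple pole of $\Gamma$ at $s=0$; there $L(1+s,\chi)^z$ is holomorphic, since $1-\alpha>1-\delta$ and $|\im(s)|\le T$ keeps us well inside the rectangle, and the residue is $L(1,\chi)^z$. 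Hence
$$\sum_{n=1}^\infty\frac{d_z(n)\chi(n)}{n}\,e^{-n/y}-L(1,\chi)^z=\frac1{2\pi i}\int_{\mathcal{C}}L(1+s,\chi)^z\,\Gamma(s)\,y^s\,ds,$$
and it remains to bound the integral over $\mathcal{C}$.

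The key input is the classical estimate that, under the zero-free hypothesis, $\log L(s,\chi)=\sum_{2\le n\le T}\Lambda(n)\chi(n)n^{-s}(\log n)^{-1}+O_\delta(1)$ for $\re(s)\ge1-\delta/2$ and $|\im(s)|\le T$; this is proved by pushing the Perron contour for $\log L$ leftwards into the rectangle, and the exponents $\delta$ and $2/\delta$ in the hypothesis are calibrated precisely so that the resulting tail is $O_\delta(1)$, cf.\ \cite{GrSo}. On the segment $\re(s)=-\alpha$ and on the horizontal segments the prime sum is at most $\sum_{p\le T}p^{\alpha-1}+O(1)\le T^{\alpha}\sum_{p\le T}p^{-1}+O(1)\ll_\delta\log_3 q$, because $T^{\alpha}=e^{2/\delta}$; hence $|L(1+s,\chi)^z|\le\exp(O_\delta(|z|\log_3 q))$ there. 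On the half-lines $\re(s)=c$ I would use only the trivial bound $|\log L(1+s,\chi)|\le\log\zeta(1+c)\ll\log_2 y$, so there $|L(1+s,\chi)^z|\le\exp(O(|z|\log_2 y))\le y$.

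Finally I would estimate the three pieces. On $\re(s)=-\alpha$, $|\im(s)|\le T$: using $|y^s|=\exp(-\log y/\log_2 q)$, $\int_{|\im(s)|\le T}|\Gamma(-\alpha+it)|\,dt\ll\log(1/\alpha)+1\ll\log_3 q$, and $|z|\log_3 q=o(\log y/\log_2 q)$ since $|z|\le\log y/(\log_2 q)^2$, this piece is $\ll_\delta\log_3 q\cdot\exp\!\big(-\tfrac{\log y}{\log_2 q}(1-o(1))\big)\ll\exp\!\big(-\tfrac{\log y}{2\log_2 q}\big)$, the last step using $\log y\ge(\log_2 q)^3$. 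On the horizontal segments and on the half-lines $\re(s)=c$, the decay $|\Gamma(\sigma\pm iT)|\ll T^{O(1)}e^{-\pi T/2}$, resp.\ $|\Gamma(c+it)|\ll e^{-\pi|t|/2}$, together with $|y^s|\ll1$ and $|L(1+s,\chi)^z|\le y$, shows both contribute $\ll yT^{O(1)}e^{-\pi T/2}\ll\exp(-(\log q)^3)$, which is negligible. Combining the three bounds finishes the proof. The main obstacle is the short prime-sum approximation of $\log L(s,\chi)$ holding \emph{uniformly} throughout the rectangle: that is where the specific width $\delta$ and height $2(\log q)^{2/\delta}$ of the zero-free region are used, and it is also what forces the choice $\alpha\asymp1/\log_2 q$ — a larger $\alpha$ would inflate $T^{\alpha}$ past $y^{o(1)}$ — and hence the shape $\exp(-\log y/(2\log_2 q))$ of the error term; the remaining steps are routine contour estimates.
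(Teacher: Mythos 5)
Your proof is correct and follows essentially the same route as the paper: the same smoothed Mellin representation, the same contour shift to $\textup{Re}(s)=-1/\log_2 q$ picking up the residue $L(1,\chi)^z$ at $s=0$, and the same use of the zero-free rectangle to bound $\log L(1+s,\chi)$ by $O_{\delta}(\log_3 q)$ on the shifted part of the contour. The only differences are cosmetic: you rederive that bound via a Granville--Soundararajan type prime-sum approximation where the paper simply cites Lemma \ref{BoundLZeroFree} (Lemma 3.1 of \cite{DaLa}), and you keep the distant parts of the contour on $\textup{Re}(s)=c>0$ (where the trivial bound suffices) instead of moving them to the $1$-line and invoking \eqref{StandardBound}.
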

To prove this result, we need the following lemma from \cite{DaLa}.
\begin{lem}[Lemma 3.1 of \cite{DaLa}]\label{BoundLZeroFree} Let $q$ be large and $\chi$ be a non-principal character modulo $q$. Put $\eta=1/\log_2q$, and let $0<\delta<1/2$ be fixed.  Assume that $L(z,\chi)$ has no zeros in the rectangle $\{z: 1-\delta < \textup{Re}(z)\leq 1\text{ and } |\textup{Im}(z)|\leq 2(\log q)^{2/\delta}\}.$ Then for any $s=\sigma+it$ with $1-\eta\leq \sigma\leq 1$ and $|t|\leq \log^4q$ we have
$$ |\log L(s,\chi)|\leq \log_3 q+O_{\delta}(1).$$
\end{lem}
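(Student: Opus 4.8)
The plan is to prove this by Littlewood's method: I represent $\log L(s,\chi)$ through a smoothed Perron-type contour integral and extract the sharp size $\log_3 q$ from a short sum over primes, using the zero-free rectangle only to control a tail. A useful observation at the outset is that, since $\chi$ is a character modulo $q$, the coefficients $\Lambda(n)\chi(n)$ in $\log L(s,\chi)=\sum_n \Lambda(n)\chi(n)/(n^s\log n)$ are supported on $n$ coprime to $q$, so no separate reduction to primitive characters is needed. Set $x=(\log q)^{A}$ with $A=A(\delta)$ a large constant to be chosen, and write $\eta=1/\log_2 q$. Starting from the Mellin identity $e^{-u}=\frac{1}{2\pi i}\int_{(c)}\Gamma(w)u^{-w}\,dw$ and interchanging summation and integration for $\re(w)=c$ large enough that $\re(s)+c>1$, I obtain
$$\frac{1}{2\pi i}\int_{(c)}\Gamma(w)x^{w}\log L(s+w,\chi)\,dw=\sum_{n=1}^{\infty}\frac{\Lambda(n)\chi(n)}{n^{s}\log n}e^{-n/x}.$$
I then shift the contour to the line $\re(w)=c'$ with $c'=(1-\delta/2)-\re(s)$, so that $s+w$ has real part $1-\delta/2$ there and hence lies in the zero-free rectangle. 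The only singularity crossed is the simple pole of $\Gamma$ at $w=0$, with residue exactly $\log L(s,\chi)$, and the horizontal connecting segments vanish by the rapid decay of $\Gamma$ (choosing their heights to avoid the zeros of $L$). This gives the exact identity
$$\log L(s,\chi)=\sum_{n=1}^{\infty}\frac{\Lambda(n)\chi(n)}{n^{s}\log n}e^{-n/x}-\frac{1}{2\pi i}\int_{(c')}\Gamma(w)x^{w}\log L(s+w,\chi)\,dw.$$

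The second step is to bound the main term. Writing $n=p^{k}$, the prime powers with $k\ge 2$ contribute $O(1)$, while the primes give a sum of modulus at most $\sum_{p}p^{-\sigma}e^{-p/x}$. Since $1-\eta\le\sigma\le 1$ and $p^{\eta}\le x^{\eta}=e^{A}$ for $p\le x$, comparing with the case $\sigma=1$ yields
$$\sum_{p}\frac{e^{-p/x}}{p^{\sigma}}=\sum_{p}\frac{e^{-p/x}}{p}+O_{A}(1)=\log_{2}x+O_{A}(1)=\log_{3}q+O_{\delta}(1),$$
the error from replacing $\sigma$ by $1$ being $\ll \eta e^{A}\sum_{p\le x}(\log p)/p\ll_{\delta}1$ because $\eta\log x=A$. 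This is precisely where the sharp constant $1$ in front of $\log_{3}q$ is produced, and where taking $x$ to be a fixed power of $\log q$ is essential.

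The third step is to bound the shifted integral. On the line $\re(w)=c'$ the point $s+w$ has real part $1-\delta/2$ and imaginary part $t+\im(w)$; for $|\im(w)|$ up to about $(\log q)^{2/\delta}$ this stays inside the zero-free rectangle, using $|t|\le\log^{4}q$ together with $2/\delta>4$, and there $|\log L(s+w,\chi)|\ll\log\big(q(|t|+|\im(w)|+2)\big)$ by the convexity bound for $\log|L|$ and the standard estimate for $\arg L$; for larger $|\im(w)|$ the factor $\Gamma(w)\ll e^{-\pi|\im(w)|/2}$ makes the contribution negligible. Hence the integral is $\ll x^{c'}\log q=(\log q)^{1-A\delta/2}$, which is $O_{\delta}(1)$ once $A\ge 2/\delta$. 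Combining the three steps and recalling that the modulus bound on the main term controls both the real and imaginary parts of $\log L$ at once, I conclude $|\log L(s,\chi)|\le\log_{3}q+O_{\delta}(1)$.

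The main obstacle is getting the correct constant rather than merely the right order of magnitude. A purely complex-analytic argument — Borel–Carath\'eodory, the Hadamard three-lines theorem, or integrating $L'/L$ across the strip — yields only a bound of size $\eta\log q=\log q/\log_{2}q$, because $\log|L|$ can be as large as $\log q$ on the left edge $\re(s)=1-\delta$ of the rectangle, and the harmonic measure of that edge as seen from $s$ is of order $\eta/\delta$. This barrier is beaten only by the arithmetic input that the short prime sum is genuinely of size $\log_{3}q$. The two delicate points are therefore (i) evaluating that prime sum with the sharp leading constant uniformly for $\sigma\in[1-\eta,1]$, which rests on the identity $\eta\log x=A$, and (ii) choosing $x=(\log q)^{A(\delta)}$ large enough that the tail integral, controlled only by the crude $\log q$ bound valid inside the zero-free rectangle, is absorbed into $O_{\delta}(1)$ via the factor $x^{c'}$.
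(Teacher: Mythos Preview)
The paper does not give its own proof of this lemma; it is quoted verbatim as Lemma~3.1 of \cite{DaLa} and used as a black box in the proof of Proposition~\ref{ShortApproxL}. So there is no in-paper argument to compare your attempt against.

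That said, your proposed proof is essentially correct and is in fact the standard way such a result is established (and is very close to how the cited reference proceeds). The smoothing identity, the contour shift to $\re(w)=1-\delta/2-\sigma$, and the choice $x=(\log q)^{A}$ with $A\ge 2/\delta$ are exactly the right moves, and your computation of the short prime sum $\sum_{p}p^{-\sigma}e^{-p/x}=\log_3 q+O_\delta(1)$, using $x^{\eta}=e^{A}$ to control the $\sigma\neq 1$ perturbation, is correct. Two points deserve a touch more care. First, the identity you display with an unrestricted integral $\int_{(c')}$ is not literally valid, since $\log L(s+w,\chi)$ may have branch singularities on that line once $|\im(s+w)|$ exceeds $2(\log q)^{2/\delta}$; the clean way is to truncate at height $T_0\asymp(\log q)^{2/\delta}$ \emph{before} shifting, so that the truncated rectangle lies entirely in the zero-free region, and then absorb the truncation and horizontal-segment errors via Stirling. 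Second, your appeal to ``the convexity bound for $\log|L|$ and the standard estimate for $\arg L$'' for the bound $|\log L(s+w,\chi)|\ll_\delta\log\bigl(q(|t|+|\im w|+2)\bigr)$ inside the zero-free rectangle is a bit loose: convexity alone bounds $\log|L|$ only from above. The correct justification is the explicit formula $\tfrac{L'}{L}(z,\chi)=\sum_{|\gamma-\im z|\le 1}\tfrac{1}{z-\rho}+O(\log q(|\im z|+2))$, noting that at $\re(z)=1-\delta/2$ each term in the sum is $\ll_\delta 1$ and there are $O(\log q(|\im z|+2))$ of them, and then integrating horizontally from $\re(z)=2$. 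With these two clarifications your argument is complete.
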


\begin{proof}[Proof of Proposition \ref{ShortApproxL}]Since $\frac{1}{2\pi i}\int_{2-i\infty}^{2+i\infty}y^s\Gamma(s)ds= e^{-1/y}$ then
$$\frac{1}{2\pi i}\int_{2-i\infty}^{2+i\infty}L(1+s,\chi)^z\Gamma(s)y^sds= \sum_{n=1}^{\infty}\frac{d_z(n)\chi(n)}{n}e^{-n/y}.$$
we shift the contour to $\mathcal{C}$, where $\mathcal{C}$ is the path which joins
$$ -i\infty, -i(\log q)^4, -\eta-i(\log q)^4, -\eta+i(\log q)^4, -i(\log q)^4, +i\infty,$$
where $\eta=1/\log_2 q$.
By our assumption, we encounter only a simple pole at $s=0$ which leaves the residue $L(1,\chi)^z$. Also, since $\chi$ is a non-exceptional character, we can use the following standard bound (see for example Lemma 2.2 of \cite{La1})
\begin{equation}\label{StandardBound}
\log L(1+it, \chi)\ll \log_2 \big(q(|t|+2)\big).
\end{equation}
Using \eqref{StandardBound} together  with Stirling's formula we obtain
$$
\frac{1}{2\pi i}\left(\int_{-i\infty}^{-i(\log q)^4}+\int_{i(\log q)^4}^{i\infty}\right)L(1+s,\chi)^z\Gamma(s) y^s ds\ll \int_{(\log q)^4}^{\infty}e^{O(|z|\log_2 qt)}e^{-\frac{\pi}{3}t}dt\ll \frac{1}{q}.
$$
Finally, using that $\Gamma(s)$ has a simple pole at $s=0$ together with Stirling's formula and Lemma \ref{BoundLZeroFree}, we deduce that
\begin{align*}
&\frac{1}{2\pi i}\left(\int_{-i(\log q)^4}^{-\eta-i(\log q)^4}+\int_{-\eta-i(\log q)^4}^{-\eta+i(\log q)^4}+\int_{-\eta+i(\log q)^4}^{i(\log q)^4}\right)L(1+s,\chi)^z\Gamma(s)y^s ds\\
&\ll \exp\left(-\frac{\pi}{3}(\log q)^4+O(|z|\log_3 q)\right)+ \frac{y^{-\eta}}{\eta}\exp\big(|z|\log_3q+O_{\delta}(|z|)
\big)(\log q)^4\\
&\ll_{\delta} \exp\left(-\frac{\log y}{2\log_2 q}\right).
\end{align*}

\end{proof}

%%%%%%%%%%%%%%%%%%%%%%%%%%%%%%%%%%%%%%%%%%%%
\section{Complex moments of $L(1,\chi_d)$ over $d\in \mc{P}(x)$: Proof of Theorem \ref{ComplexMoments}}

Let $\widetilde{\mc{P}}(x)$ be the set of discriminants $d=-p$ such that $\sqrt{x}\leq p\leq x$ is prime, $p\equiv 3 \bmod 4$ and  $L(s,\chi_{-p})$ has no zeros in the rectangle 
$\{s: 9/10 <\text{Re}(s)\leq 1 \text{ and } |\text{Im}(s)|\leq 2(\log x)^{20}\}$. To bound $|\mc{P}(x)\setminus \widetilde{\mc{P}}(x)|$ we use the following zero-density result of Heath-Brown \cite{HB}, which states that for $1/2<\sigma<1$ and
 any $\varepsilon>0$ we have
$$\sums_{|d|\leq x} N(\sigma,T, \chi_d)\ll (xT)^{\varepsilon}x^{3(1-\sigma)/(2-\sigma)}T^{(3-2\sigma)/(2-\sigma)},$$
where $N(\sigma, T, \chi_d)$ is the number of zeros $\rho$ of $L(s,\chi_d)$ with $\re(\rho)\geq \sigma$ and $|\im(\rho)|\leq T$, and $\sums$ indicates that the sum is over fundamental discriminants.  Using this bound we obtain
\begin{equation}\label{ExceptionalSize}
|\mc{P}(x)\setminus \widetilde{\mc{P}}(x)|\ll x^{1/2}.
\end{equation}

By  \eqref{StandardBound}, it follows that $\log L(1,\chi_{-p})\ll \log_2 p$  if $\chi_{-p}$ is a non-exceptional character. Since there is at most one exceptional prime modulus between any two powers of $2$ (see Chapter 14 of \cite{Da}), it follows that there are at most $O(\log x)$ exceptional characters $\chi_{-p}$ with $p\leq x$. In this case, we shall use the trivial bound $L(1,\chi_{-p})\gg p^{-1/2},$ which follows from the class number formula \eqref{Class}. Therefore, using \eqref{ExceptionalSize} and noting that $\re(z)\geq -1$ we obtain
 \begin{equation}\label{GoodL}
\sum_{d\in \mc{P}(x)\setminus \widetilde{\mc{P}}(x)} L(1,\chi_d)^z
\ll x^{1/2} \log x + x^{1/2} \exp\big(O(|z|\log_2 x)\big)\ll x^{2/3}.
\end{equation}
In the remaining part of the proof we let $k=\lfloor |z|\rfloor+1$. If $d\in \widetilde{\mc{P}}(x)$, then we can use Proposition \ref{ShortApproxL} in order to approximate $L(1,\chi_d)^z$. This gives 
\begin{equation}\label{ApproxGood}
\sum_{d\in \widetilde{\mc{P}}(x)} L(1,\chi_d)^z= \sum_{d\in \widetilde{\mc{P}}(x)}
\sum_{n=1}^{\infty}\frac{d_z(n)\chi_{d}(n)}{n}e^{-n/y}+  O\left(x\exp\left(-\frac{\sqrt{\log x}}{5\log\log x}\right)\right),
\end{equation} 
where $y=\exp\left(\frac12\sqrt{\log x}\right).$ We now extend the main term of the last estimate, so as to include all elements of $\mc{P}(x)$. Using  \eqref{Divisor2} and \eqref{ExceptionalSize}, we deduce that 
$$ 
\sum_{d\in \mc{P}(x)\setminus \widetilde{\mc{P}}(x)} \sum_{n=1}^{\infty}\frac{d_z(n)\chi_{d}(n)}{n}e^{-n/y} \ll x^{1/2} \sum_{n=1}^{\infty}\frac{d_k(n)}{n}e^{-n/y}\ll x^{1/2}(\log 2y)^k\ll x^{2/3}.
$$
Combining this estimate with \eqref{GoodL} and \eqref{ApproxGood}  gives
\begin{align*}
\sum_{d\in \mc{P}(x)} L(1,\chi_d)^z
&= \sum_{n=1}^{\infty} \frac{d_z(n)}{n} e^{-n/y}\sum_{d\in \mc{P}(x)} \chi_d(n) + O\left(x\exp\left(-\frac{\sqrt{\log x}}{5\log\log x}\right)\right)\\
&= \sum_{n\leq y(\log y)^2} \frac{d_z(n)}{n} e^{-n/y}\sum_{d\in \mc{P}(x)} \chi_d(n) + O\left(x\exp\left(-\frac{\sqrt{\log x}}{5\log\log x}\right)\right),
\end{align*}
since the contribution of the terms $n>y\log^2 y$ to the right hand side
is 
\begin{equation}\label{Tail}
\begin{aligned}
 \ll x \sum_{n>y \log^2 y}\frac{d_k(n)}{n}e^{-n/y} &\leq  x \exp\left(-\frac{(\log y)^2}{2}\right)\sum_{n=1}^{\infty}\frac{d_k(n)}{n}e^{-n/(2y)}\\
 &\ll  x^{7/8} (\log 4y)^k \ll x^{8/9},
\end{aligned}
\end{equation}
by \eqref{Divisor2}.
We are now able to use Lemma \ref{CharSum} to estimate the sum $\sum_{d\in \mc{P}(x)} \chi_d(n)$  since $n\leq y(\log y)^2\leq \exp\left(\sqrt{\log x}\right)$. Thus, Lemma \ref{CharSum} gives 
\begin{equation}\label{EstimateMo}
\begin{aligned} 
\sum_{d\in \mc{P}(x)} L(1,\chi_d)^z = & \ \frac{\textup{Li}(x)}{2}\sum_{m\leq \sqrt{y}\log y} \frac{d_z(m^2)}{m^2} e^{-m^2/y} \\
& -\textup{sgn}(q_1) \displaystyle{\frac{\textup{Li}(x^{\beta})}{2}} \sum_{m \leq \sqrt{y/|q_1|}\log y} \frac{d_z(|q_1| m^2)}{|q_1| m^2} e^{-|q_1|m^2/y} +O\left(\mc{E}_1(x)\right),\\
\end{aligned}
\end{equation}
where 
\begin{align*}
\mc{E}_1(x) & \ll x\exp\left(-c\sqrt{\log x}\right)\sum_{n=1}^{\infty} \frac{d_k(n)}{n} e^{-n/y} + x\exp\left(-\frac{\sqrt{\log x}}{5\log\log x}\right)\\ & \ll x\exp\left(-\frac{\sqrt{\log x}}{5\log\log x}\right) , 
\end{align*}
by \eqref{Divisor2}. Next, we use \eqref{Tail} to complete the two sums in the right hand side of \eqref{EstimateMo}. This yields
\begin{equation}\label{EstimateMo2}
\begin{aligned} 
\sum_{d\in \mc{P}(x)} L(1,\chi_d)^z= 
& \ \frac{\textup{Li}(x)}{2}\sum_{m=1}^{\infty} \frac{d_z(m^2)}{m^2} e^{-m^2/y} -\textup{sgn}(q_1) \displaystyle{\frac{\textup{Li}(x^{\beta})}{2}} \sum_{m=1}^{\infty} \frac{d_z(|q_1| m^2)}{|q_1| m^2} e^{-|q_1|m^2/y}\\
&  +O\left(x\exp\left(-\frac{\sqrt{\log x}}{5\log\log x}\right)\right).\\
\end{aligned}
\end{equation}
By \eqref{RandomExpectation1}, in order to complete the proof of Theorem \ref{ComplexMoments}, we need to replace the factors $e^{-m^2/y}$  and $e^{-|q_1|m^2/y}$ in the above sums by $1$,
and in so doing we introduce an error term of size at most
\begin{equation}\label{ErrorSmooth}
\mc{E}_2(x)=\textup{Li}(x)\left(\sum_{m=1}^{\infty} \frac{d_k(m^2)}{m^2}\left(1-e^{-m^2/y}\right)+ \sum_{m=1}^{\infty} \frac{d_k(|q_1|m^2)}{|q_1|m^2}\left(1-e^{-|q_1|m^2/y}\right)\right).
\end{equation}
We shall use the bound $1-e^{-t}\ll t^{\alpha}$ which is valid for all $t>0$ and  $0<\alpha\leq 1$.  Choosing $\alpha= 1/\log_2 x$, and using \eqref{BoundRandom} we deduce that 
$$
\mc{E}_2(x) \ll   y^{-\alpha} x\left(\sum_{m=1}^{\infty}\frac{d_k(m^2)}{m^{2-2\alpha}}+\sum_{m=1}^{\infty}\frac{d_k(|q_1|m^2)}{(|q_1|m^2)^{1-\alpha}}\right)\ll y^{-\alpha} x \cdot \sum_{m=1}^{\infty}\frac{d_k(m^2)}{m^{2-2\alpha}}, 
$$
 Finally, using the bound \eqref{Divisor3} and noting that $d_k(m^2)\leq d_k(m)^2$ for all integers $m\geq 1$, we obtain
$$  \sum_{m=1}^{\infty}\frac{d_k(m^2)}{m^{2-2\alpha}}\leq \sum_{m=1}^{\infty}\frac{d_k(m)^2}{m^{2-2\alpha}}\leq \exp\left((2+o(1)) k\log_2 k \right)\ll \exp\left(\frac{\sqrt{\log x}}{20\log_2x}\right).
$$
This implies that 
$ \mc{E}_2(x) \ll \exp\left(-\sqrt{\log x}/(5\log_2 x)\right).$
Combining this estimate with \eqref{EstimateMo2} completes the proof of Theorem \ref{ComplexMoments}.

%%%%%%%%%%%%%%%%%%%%%%%%%%%%%%%%%%%%%%%%%%
\section{Proof of Theorem \ref{AverageClass}}

To prove Theorem \ref{AverageClass} we shall follow the argument in \cite{La2}, which is a refinement of the work of Soundararajan \cite{So}. 
\begin{lem}\label{SmoothPerron}
Let $\lambda, c >0$ be real numbers and $N\geq 0$ be an integer. For $y>0$ we define
$$
I_{c, \lambda, N}(y):=\frac{1}{2\pi i}\int_{c-i\infty}^{c+i\infty} y^s \left(\frac{e^{\lambda s}-1}{\lambda s}\right)^N\frac{ds}{s},
$$
Then we have 
$$
I_{c, \lambda, N}(y)
\begin{cases}=1 & \text{ if } y> 1, \\ \in [0,1] & \text{ if } e^{-\lambda N } \leq y \leq 1,\\
 =0 & \text{ if } 0<y< e^{-\lambda N }. \end{cases}
$$
\end{lem}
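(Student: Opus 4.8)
The plan is to recognize $I_{c,\lambda,N}(y)$ as an $N$-fold iterated (smoothed) version of the truncated Perron kernel $\tfrac{1}{2\pi i}\int_{(c)} y^s \, ds/s$, which is $1$ for $y>1$ and $0$ for $0<y<1$ but converges only conditionally. The factor $\left(\tfrac{e^{\lambda s}-1}{\lambda s}\right)^N$ should be viewed as the Mellin/Laplace transform of an $N$-fold convolution of the uniform density on $[0,\lambda]$, normalized to have total mass $1$; multiplying by it and integrating therefore replaces the sharp cutoff at $y=1$ by a monotone transition spread over the interval where $\log(1/y)$ ranges in $[0,\lambda N]$, i.e. $e^{-\lambda N}\le y\le 1$.

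Concretely, first I would record the elementary identity $\tfrac{e^{\lambda s}-1}{\lambda s}=\tfrac{1}{\lambda}\int_0^{\lambda} e^{us}\,du=\int_0^1 e^{\lambda v s}\,dv$, so that
$$
\left(\frac{e^{\lambda s}-1}{\lambda s}\right)^N=\int_{[0,1]^N} e^{\lambda(v_1+\cdots+v_N)s}\, dv_1\cdots dv_N .
$$
Substituting this into the definition and interchanging the (absolutely convergent, thanks to the $1/|s|^{N+1}$ decay for $N\ge 1$; the case $N=0$ is the classical discontinuous Perron integral handled separately) integrals gives
$$
I_{c,\lambda,N}(y)=\int_{[0,1]^N}\left(\frac{1}{2\pi i}\int_{c-i\infty}^{c+i\infty}\bigl(y e^{\lambda(v_1+\cdots+v_N)}\bigr)^s\,\frac{ds}{s}\right)dv_1\cdots dv_N .
$$
The inner integral is the standard Perron kernel evaluated at $w=y e^{\lambda(v_1+\cdots+v_N)}$, equal to $1$ if $w>1$, $0$ if $0<w<1$ (and $1/2$ at $w=1$, a measure-zero set). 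Hence $I_{c,\lambda,N}(y)$ equals the $N$-dimensional Lebesgue measure of $\{v\in[0,1]^N: v_1+\cdots+v_N>\tfrac{1}{\lambda}\log(1/y)\}$. If $y>1$ the threshold is negative, so this measure is $1$; if $y<e^{-\lambda N}$ the threshold exceeds $N$, so the measure is $0$; in between it is a number in $[0,1]$. This yields all three cases.

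The only genuinely delicate point is the justification of exchanging the order of integration and the evaluation of the classical Perron integral $\tfrac{1}{2\pi i}\int_{(c)} w^s\, ds/s$, which converges only as an improper (symmetric) integral; one handles this by the usual contour-shift argument (shifting to $-\infty$ for $w>1$, to $+\infty$ for $w<1$), and for $N\ge 1$ Fubini applies directly because $\bigl|\left(\tfrac{e^{\lambda s}-1}{\lambda s}\right)^N s^{-1}\bigr|\ll |s|^{-N-1}$ on the line $\mathrm{Re}(s)=c$ is integrable. I expect this routine analytic bookkeeping to be the main (though modest) obstacle; everything else is the clean geometric computation of a simplex-type volume.
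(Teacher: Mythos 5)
Your proof is correct and follows essentially the same route as the paper: writing $\left(\frac{e^{\lambda s}-1}{\lambda s}\right)^N$ as an $N$-fold average of exponentials, interchanging integrals, and applying the classical Perron formula to each shifted argument $y e^{\lambda(v_1+\cdots+v_N)}$ is exactly the identity the paper uses, with your simplex-volume interpretation and Fubini justification filling in the routine details the paper leaves implicit.
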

\begin{proof}
The result follows from Perron's formula together with the following identity 
$$\frac{1}{2\pi i}\int_{c-i\infty}^{c+i\infty} y^s \left(\frac{e^{\lambda s}-1}{\lambda s}\right)^N\frac{ds}{s}
= \frac{1}{\lambda^N}\int_{0}^{\lambda}\cdots \int_0^{\lambda} \frac{1}{2\pi i} \int_{c-i\infty}^{c+i\infty}
\left(ye^{t_1+ \cdots+ t_N}\right)^s\frac{ds}{s} dt_1\cdots dt_N
$$ for $N\geq 1$.
\end{proof}

\begin{proof}[Proof of Theorem \ref{AverageClass}]
In order to obtain an asymptotic formula for $\sum_{\substack{h\leq H\\ h  \text{ odd}}}\F(h)$,
we first show that we can restrict our attention to discriminants $d\in \mathcal{P}(X)$ with  $X:=H^2 (\log H)^5$.  Indeed, if $-d \geq X$ and $h(d)\leq H$ then by the class number formula \eqref{Class} we must have $L(1,\chi_d)\ll 1/(\log H)^{5/2}$. However, it follows from Tatuzawa's refinement of Siegel's Theorem \cite{Ta} that for large $|d|$, we have $L(1,\chi_d)\geq 1/(\log |d|)^2$ with at most one exception. Thus
we obtain
$$\sum_{\substack{h\leq H\\ h  \text{ odd}}}\F(h)= \sum_{\substack{d \in \mathcal{P}(X)\\ h(d) \leq H}} 1
	+ O(1).
$$
Let $c=1/\log H$, $N$ be a positive integer, and $0<\lambda \leq 1/N$ be a real number to be chosen later. Then it follows from Lemma \ref{SmoothPerron} that 
\begin{equation}\label{Bounds}
\sum_{\substack{h\leq H\\ h  \text{ odd}}}\F(h)
\leq \frac{1}{2\pi i}\int_{c-i\infty}^{c+i\infty}\sum_{d\in \mc{P}(X)}\frac{H^s}{h(d)^s}\left(\frac{e^{\lambda s}-1}{\lambda s}\right)^N\frac{ds}{s} +O(1)\leq \sum_{\substack{h\leq e^{\lambda N}H\\ h  \text{ odd}}}\F(h).
\end{equation}
Let $T:= \sqrt{\log X}/(\log_2 X)^2.$ 
Since  $|e^{\lambda s}-1|\leq e^{\lambda c}+1\leq 3$ if $H$ is large enough,  and $h(d)\geq 1$, it follows that the contribution of the region $|s|>T$ to the integral in \eqref{Bounds} is
\begin{equation}\label{ErrorCut0}
\ll X\left(\frac{3}{\lambda}\right)^N \int_{\substack{|s|> T\\ \re(s)=c}} \frac{|ds|}{|s|^{N+1}}\ll \frac{X}{N}\left(\frac{3}{\lambda T}\right)^N.
\end{equation}

By partial summation and \eqref{Class}, it follows from Theorem \ref{ComplexMoments} that for all complex numbers $s$ such that $\re(s)=c$ and $|s|\leq T$ we have
\begin{equation}\label{Moments}
\begin{aligned}
\sum_{d\in \mc{P}(X)} h(d)^{-s}= & \ \frac{\pi^s}{2}\cdot \ex\left(L(1,\X)^{-s}\right)\int_2^X x^{-s/2}d \textup{Li}(x) \\
& - \frac{\pi^s}{2} \textup{sgn}(q_1) \cdot \ex\big(\X(|q_1|)\cdot L(1,\X)^{-s}\big)\int_2^X x^{-s/2}d \textup{Li}(x^{\beta})\\
 & +O\left(X\exp\left(-\frac{\sqrt{\log X}}{6\log_2 X}\right)\right).
 \end{aligned}
\end{equation}
Combining \eqref{ErrorCut0} and \eqref{Moments} shows that the integral in \eqref{Bounds} equals
\begin{equation}\label{MTerm}
\begin{aligned}
&\frac{1}{2\pi i}\int_{\substack{|s|\leq T\\ \re(s)=c}} \frac{1}{2}\cdot \ex\left(\left(\frac{\pi H}{L(1,\X)}\right)^{s}\right)\left(\int_2^X x^{-s/2}d \textup{Li}(x)\right) \left(\frac{e^{\lambda s}-1}{\lambda s}\right)^N\frac{ds}{s} \\
 & 
 -\frac{1}{2\pi i}\int_{\substack{|s|\leq T\\ \re(s)=c}} \frac{\textup{sgn}(q_1)}{2}\cdot \ex\left(\X(|q_1|)\cdot \left(\frac{\pi H}{L(1,\X)}\right)^{s}\right)\left(\int_2^X x^{-s/2}d\textup{Li}(x^{\beta})\right)
 \left(\frac{e^{\lambda s}-1}{\lambda s}\right)^N\frac{ds}{s} + \mathcal{E}_3,
 \end{aligned}
 \end{equation}
 where 
 $$\mathcal{E}_3\ll \frac{X}{N}\left(\frac{3}{\lambda T}\right)^N+ \frac{3^N T }{c} X\exp\left(-\frac{\sqrt{\log X}}{6\log_2 X}\right),$$
 since $|(e^{\lambda s}-1)/\lambda s|\leq 3$ if $H$ is large enough.
 Choosing $\lambda = 10/T$ and $N=[10\log_2 H]$,  implies that\begin{equation}\label{ErrorCut}
\mathcal{E}_3\ll \frac{H^2}{(\log H)^{10}}.
\end{equation}

We now extend the integrals in \eqref{MTerm} to $\int_{c-i\infty}^{c+i\infty}$, and in so doing  we introduce an error term $\mathcal{E}_4$, where similarly to \eqref{ErrorCut0} we have
$$ \mathcal{E}_4\ll \ex\left(L(1,\X)^{-c}\right)\frac{\textup{Li}(X)}{N}\left(\frac{3}{\lambda T}\right)^N\ll\frac{H^2}{(\log H)^{10}}.
$$
Therefore, we deduce that the integral in \eqref{Bounds} equals
\begin{equation}\label{ExpectationComputation}
\begin{aligned}
&\frac{1}{2}\cdot \ex\left(\int_2^X I_{c, \lambda, N} \left(\frac{\pi H }{\sqrt{x}}L(1, \X)^{-1}\right)d \textup{Li}(x)\right)\\
&-\frac{\textup{sgn}(q_1)}{2}\ex\left(\X(|q_1|)\int_2^X I_{c, \lambda, N} \left(\frac{\pi H }{\sqrt{x}}L(1, \X)^{-1}\right)d \textup{Li}(x^{\beta})\right)+ O\left(\frac{H^2}{(\log H)^{10}}\right).
\end{aligned}
\end{equation}
 Now, it follows from Lemma \ref{SmoothPerron} that for any $1\leq x\leq X$ we have
$$ I_{c, \lambda, N} \left(\frac{\pi H }{\sqrt{x}}L(1, \X)^{-1}\right)= \begin{cases} 1 & \text{ if } \sqrt{x}L(1, \X)\leq \pi H , \\ \in [0,1] & \text{ if } \pi H< \sqrt{x}L(1, \X)\leq e^{\lambda N }\pi H,\\
0 & \text{ if } \sqrt{x} L(1, \X)> \pi H e^{\lambda N }. \end{cases}$$
Thus we obtain
\begin{equation}\label{Expectation}
\begin{aligned}
 &\ex\left(\int_2^X I_{c, \lambda, N} \left(\frac{\pi H }{\sqrt{x}}L(1, \X)^{-1}\right)d \textup{Li}(x) \right) \\
&=\ex\left(\textup{Li}\left(\min\left( \frac{\pi^2 H^2}{L(1,\X)^2}, X\right)\right) +O\left(\int_{\pi^2H^2/L(1, \X)^2}^{e^{2\lambda N}\pi^2H^2/L(1, \X)^2} d \textup{Li}(x)\right)\right)\\
&= \ex\left(\textup{Li}\left(\min\left( \frac{\pi^2 H^2}{L(1,\X)^2}, X\right)\right)\right)+ O\left(\frac{H^2(\log_2 H)^3}{(\log H)^{3/2}}\right),
\end{aligned}
\end{equation}
by \eqref{LargeDeviation} together with the fact that $e^{2\lambda N}-1 \ll (\log_2 H)^3/\sqrt{\log H}$.
Furthermore,  it follows from \eqref{LargeDeviation2} that 
$$ \mathbb{P}\left(L(1, \X)\leq \frac{\pi H}{\sqrt{X}}\right)\ll e^{-X}.$$
%using Proposition 1 of \cite{GrSo} (which states that the probability that $L(1,\X)\leq \pi^2/(6e^{\gamma}\tau)$ is $\exp(- e^{\tau-C_1}/\tau (1+o(1))$ for some explicit constant $C_1$), we obtain
Therefore, we get
\begin{align*}
 \ex\left(\textup{Li}\left(\min\left( \frac{\pi^2 H^2}{L(1,\X)^2}, X\right)\right)\right)
 &= \ex\left(\textup{Li}\left(\frac{\pi^2 H^2}{L(1,\X)^2}\right)\right) +O\left(1\right)\\
 &= \frac{\pi^2 H^2}{2\log H} \cdot \ex\left(L(1,\X)^{-2}\right)+ O\left(\frac{H^2}{(\log H)^2}\right).
\end{align*}
Inserting this estimate in \eqref{Expectation} gives
\begin{equation}\label{MainIntegralPerron}
\ex\left(\int_2^X I_{c, \lambda, N} \left(\frac{\pi H }{\sqrt{x}}L(1, \X)^{-1}\right)d \textup{Li}(x)\right)=  \frac{\pi^2 H^2}{2\log H} \cdot \ex\left(L(1,\X)^{-2}\right)+O\left(\frac{H^2(\log_2 H)^3}{(\log H)^{3/2}}\right).
\end{equation}
Using the same argument, we also derive
\begin{equation}\label{SiegelExtra}
\begin{aligned} 
&\ex\left(\X(|q_1|)\int_2^X I_{c, \lambda, N} \left(\frac{\pi H }{\sqrt{x}}L(1, \X)^{-1}\right)d \textup{Li}(x^{\beta})\right)\\
&= \frac{(\pi H)^{2\beta}}{2\beta \log H}\cdot \ex\left(\X(|q_1|)L(1,\X)^{-2\beta}\right)+O\left(\frac{H^2(\log_2 H)^3}{(\log H)^{3/2}}\right).
\end{aligned}
\end{equation}
A simple computation shows that 
$$ 
\ex\left(L(1,\X)^{-2}\right)= \prod_{p} \left(1-\frac{1}{p^4}\right)\left(1-\frac{1}{p^2}\right)^{-1}=\frac{\zeta(2)}{\zeta(4)}=\frac{15}{\pi^2}.
$$
On the other hand, since $d_{2\beta}(n)\leq d(n)$ by \eqref{Divisor1} (where $d(n)=d_2(n)$ is the number of divisors of $n$), and $|q_1|$ is square-free, then it follows from  \eqref{RandomExpectation1} and \eqref{RandomExpectation2} that
$$
\ex\left(\X(|q_1|)L(1,\X)^{-2\beta}\right) \leq \ex\left(\X(|q_1|)L(1,\X)^{-2}\right)= \ex\left(L(1,\X)^{-2}\right) \prod_{p\mid q_1} \frac{4 p}{2p^2+2}\ll \frac{d(|q_1|)}{|q_1|}.
$$
Moreover, since $d(|q_1|)=|q_1|^{o(1)}$ and $|q_1|\gg (\log X)/(\log_2 X)^4\gg (\log H)/(\log_2 H)^4$ (see Chapter 20 of \cite{Da}) then
$$ \ex\left(\X(|q_1|)L(1,\X)^{-2\beta}\right)\ll_{\varepsilon} \frac{1}{(\log H)^{1-\varepsilon}}.$$
Inserting this estimate in \eqref{SiegelExtra}, and using \eqref{Bounds},  \eqref{ExpectationComputation}, and \eqref{MainIntegralPerron} we deduce that
$$ \sum_{\substack{h\leq H\\ h  \text{ odd}}}\F(h)\leq \frac{15 H^2}{4\log H}+ O\left(\frac{H^2(\log\log H)^3}{(\log H)^{3/2}}\right)\leq \sum_{\substack{h\leq e^{\lambda N}H\\ h  \text{ odd}}}\F(h).$$
Using the same inequality with $e^{\lambda N}H$ instead of $H$, and noting that $e^{2\lambda N}-1 \ll (\log_2 H)^3/\sqrt{\log H}$ completes the proof.

 \end{proof}

\end{document}